\def\boldit#1{\textit{\textbf{#1}}}
\def\dotcup{\operatorname{\dot\cup}}
 \def\boldit#1{\textit{\textbf{#1}}}
\def\SigCyc{\mbox{$\Sigma$-cyc}}
\def\sup{\mbox{\rm sup}}
 \newtheorem{thm}{Theorem}[section]
 \newtheorem{lem}[thm]{Lemma}
 \newtheorem{prop}[thm]{Proposition}
 \theoremstyle{definition}
 \theoremstyle{remark}
 \newtheorem{rem}[thm]{Remark}
 \numberwithin{equation}{section}
\begin{document}

%
%
%
%
%
%
%
%
%

\font\bfit=cmmib11 scaled \magstep 1
\font\itmed=cmti11
\def\boldp{{\bfit p}}
\def\medp{{\itmed p}}
\title[Abelian \medp-groups with a fixed elementary subgroup...]
      {Abelian \boldp-groups\\
        with a fixed elementary subgroup\\
        or with a fixed elementary quotient}

\author[J.\ Kosakowska]{Justyna Kosakowska}

\address{%
  Mathematics and Computer Science, Nicolaus Copernicus University, Toru\'n, Poland
}
\email{justus@mat.umk.pl}


\author[M.\ Schmidmeier]{Markus Schmidmeier}
\address{Mathematics and Statistics, Florida Atlantic University,
              Boca Raton, Florida}
\email{markus@math.fau.edu}

\author[M.\ Schreiner]{Martin Schreiner}
\address{Woerthstr. 21, 81667 München, Germany}
\email{mwschreiner@gmx.net}

\subjclass{Primary 20K27; Secondary 13C05, 47A15}

\keywords{Birkhoff Problem, abelian $p$-groups, subgroups, invariant subspaces}

\date{January 1, 2004}

\begin{abstract}
In his 1934 paper, G.\ Birkhoff poses the problem of classifying pairs $(G,U)$
  where $G$ is an abelian group and $U\subset G$ a subgroup,
  up to automorphisms of $G$. In general, Birkhoff's Problem is not considered
  feasible.
  In this note, we fix a prime number $p$ and assume that $G$ is a direct sum of cyclic $p$-groups
  and $U\subset G$ is a~subgroup. Under the assumption that the factor group
  $G/U$ is an elementary abelian $p$-group,
  we show that the  pair $(G,U)$ always has a direct sum decomposition
  into pairs of type $(\mathbb Z/(p^n),\mathbb Z/(p^n))$ or $(\mathbb Z/(p^n), (p))$.
  Surprisingly, in the dual situation we need an additional condition.
  If we assume that $U$ itself is an elementary
  subgroup of $G$, then we show that
  the pair $(G,U)$ has a direct sum decomposition
  into pairs of type $(\mathbb Z/(p^n),0)$ or $(\mathbb Z/(p^n), (p^{n-1}))$
  if and only if $G/U$ is a~direct sum of cyclic $p$-groups.
  We generalize the above results to modules over commutative discrete valuation rings.
\end{abstract}

\maketitle

\section{Introduction}
For $\mathcal A$ a full subcategory of the category of abelian groups,
the \boldit{subgroup category} $\mathcal S(\mathcal A)$
consists of all pairs $(G,U)$ where $G\in\mathcal A$ and $U$ is a subgroup
of $G$.  A morphism in $\mathcal S(\mathcal A)$
from $(G,U)$ to $(G',U')$ is a group homomorphism
$f:G\to G'$ such that the image $f(U)$ is contained in $U'$.

  There has been a lot of recent interest in categories of type $\mathcal S(\mathcal A)$
  although the problem of classifying subgroup embeddings has been studied since G.\ Birkhoff \cite{b}
  in 1934.
  The survey paper \cite{k} provides an overview for a variety of classification results;
  it presents comments and references to the literature about more general monomorphism categories,
  about homological algebra (Gorenstein-projective modules), combinatorics (Littlewood-Richardson tableaux),
  geometry (weighted projective lines), and applications (topological data analysis); the paper also reviews
  and extends recent results linking subgroups of abelian groups to invariant subspaces of linear operators.

  This manuscript is about two subcategories of $\mathcal S(\mathcal A)$, one where each object $(G,U)$
  has a elementary subgroup $U$, and an other where the factor $G/U$ modulo the subgroup is elementary.
  Clearly, if $\mathcal A$ consists of all finite $p$-groups, then the two subcategories are dual to each other.
  Yet, to the surprise of the authors, this duality does no longer hold if, say, $\mathcal A$ consists of all
  abelian groups which are direct sums of cyclic (or finite) $p$-groups.
  It turns out that the latter subcategory is quite well-understood as each object is shown to be a direct sum
  of objects where the global group is cyclic,
  while the former is more complicated as it contains objects $(G,U)$ for which $G/U$ has
  higher Ulm invariants (hence $(G,U)$ cannot possibly be a direct sum of objects with global group cyclic).

  \medskip
    In this paper, we first state our results for abelian groups
    which are direct sums of cyclic $p$-groups, and then present the more general
    versions   for direct sums of cyclic modules over any discrete valuation domain.
    The proofs in Sections \ref{seq-proof-thm1} and \ref{sec-proof3} are for the
    second, more general case.

  \medskip
Recall that an abelian group is $\Sigma$-\boldit{cyclic} if it is a direct sum of cyclic groups.
In this note, we fix a prime number $p$ and consider the category $\mathcal A=\SigCyc$
of all abelian groups which are direct sums of cyclic $p$-groups.
  Note that the summands in each direct sum can be indexed by a
  (possibly infinite) cardinal number.  We will work within ZFC.

By $\mathcal F_1(\SigCyc)$ we denote the full subcategory of $\mathcal S(\SigCyc)$
of all pairs $(G,U)$ where $G\in\SigCyc$ and $U$ is a subgroup of $G$
for which the factor is an elementary abelian $p$-group,
  that is, $p\cdot(G/U)=0$.

Let $\mathcal{S}_1(\SigCyc)$ be the full subcategory of $\mathcal{S}(\SigCyc)$ of all pairs $(G,U)$,
where $G\in\SigCyc$ and $U$ itself is an elementary subgroup of $G$.

A \boldit{picket} is a pair $(G,U)$ in $\mathcal{S}(\SigCyc)$ where the group $G$ is a cyclic $p$-group. For example every picket in $\mathcal F_1(\SigCyc)$ is isomorphic to
$$P^n_n=(\mathbb Z/(p^n),\mathbb Z/(p^n))\qquad or \qquad P^n_{n-1}=(\mathbb Z/(p^n),(p)),$$
for some $n\in\mathbb N$, whereas every picket in $\mathcal{S}_1(\SigCyc)$ is  isomorphic to
$$P_0^n=(\mathbb Z/(p^n),0)\qquad or \qquad P^n_{1}=(\mathbb Z/(p^n),(p^{n-1})),$$
for some $n\in \mathbb N$.

We first state the result for $\mathcal F_1$.

\begin{thm}
  \label{theorem-Fone}
  Every object in $\mathcal F_1(\SigCyc)$ is isomorphic to a direct sum
  of pickets of type $P^n_n$ or $P^n_{n-1}$.  The direct sum decomposition is unique
  up to reordering and isomorphy of the summands.
\end{thm}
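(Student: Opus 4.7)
The plan is to reduce the pair $(G,U)$ modulo $pG$ to a subspace problem in an $\mathbb F_p$-vector space, lift a compatible basis back to $G$, and then read off the pickets. Since $G/U$ is $p$-bounded we have $pG\subseteq U$, so $\bar G:=G/pG$ is an $\mathbb F_p$-vector space and $\bar U:=U/pG$ is a subspace. First I would choose an $\mathbb F_p$-basis $\{\bar b_\alpha\}_{\alpha\in I}$ of $\bar U$ and extend it to a basis $\{\bar b_\alpha\}_{\alpha\in I}\cup\{\bar c_\beta\}_{\beta\in J}$ of $\bar G$. Each $\bar b_\alpha$ is then lifted to some $b_\alpha\in U$ and each $\bar c_\beta$ to some $c_\beta\in G$ chosen so that the generated cyclic subgroup is \emph{pure} in $G$, equivalently, a direct summand; concretely, in any reference decomposition $G=\bigoplus_\gamma\langle f_\gamma\rangle$ one may take the lift to have unit coefficient on each $f_\gamma$ in the support of its image in $\bar G$, and a short calculation using $\langle c\rangle\cap p^nG = p^n\langle c\rangle$ verifies purity. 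Any lift of $\bar b_\alpha$ automatically lies in $U$ because lifts differ by elements of $pG\subseteq U$.

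The central task is then to show $G=\bigoplus_{\alpha\in I}\langle b_\alpha\rangle\oplus\bigoplus_{\beta\in J}\langle c_\beta\rangle$. In the finitely generated case this follows by induction on the rank of $G$ once one checks, layer by layer along the filtration $G\supseteq pG\supseteq p^2G\supseteq\cdots$, that the chosen elements project to a basis of $\bar G$ and remain independent modulo each higher power of $p$. In general some care is needed, since reduced $\SigCyc$-groups can admit $p$-divisible quotients; the cleanest route is to invoke the Stacked Bases Theorem of Cohen--Gluck, which applies directly because $G/U$ is $p$-bounded and hence itself an object of $\SigCyc$. It furnishes compatible bases $\{e_\gamma\}$ of $G$ and $\{p^{k_\gamma}e_\gamma\}$ of $U$ with $k_\gamma\in\{0,1\}$ (since $pe_\gamma\in pG\subseteq U$ forces $k_\gamma\le 1$), and from such a stacked pair one reads off the $P^n_n$ summands (where $k_\gamma=0$) and the $P^n_{n-1}$ summands (where $k_\gamma=1$).

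Once $G$ is so decomposed, the decomposition of $U$ follows automatically from $pG\subseteq U$: for $\alpha\in I$, $b_\alpha\in U$ gives the picket $(\langle b_\alpha\rangle,\langle b_\alpha\rangle)\cong P^{n_\alpha}_{n_\alpha}$; for $\beta\in J$, $c_\beta\notin U$ but $pc_\beta\in pG\subseteq U$ gives $\langle c_\beta\rangle\cap U=\langle pc_\beta\rangle$ and the picket $P^{m_\beta}_{m_\beta-1}$. For uniqueness I would appeal to Ulm-type invariants: the $n$-th Ulm invariant $U_n(G)=\dim_{\mathbb F_p}p^{n-1}G[p]/p^nG[p]$ counts the total number of summands whose first coordinate is $\mathbb Z/(p^n)$, whereas $U_n(U)$, computed for $U$ viewed as an abelian group, counts only the $P^n_n$ summands (since in $P^n_{n-1}$ the subgroup $U$ has exponent $p^{n-1}$ and so contributes nothing to $U_n(U)$). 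Both invariants are intrinsic to the isomorphism class of the pair, pinning the multiplicities down uniquely. The main obstacle I anticipate is the middle paragraph, namely ensuring that a compatible family of pure cyclic summands genuinely exhausts $G$ when $G$ is infinite; the Stacked Bases Theorem (or an equivalent Kulikov-style argument adapted to the present filtration) is the natural tool here.
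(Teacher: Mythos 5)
There is a genuine gap at exactly the step you identify as central, and neither of your two routes closes it. The direct route fails first: a basis of $G/pG$ obtained by extending an arbitrary basis of $U/pG$ need not lift to a direct decomposition of $G$ at all, no matter how cleverly the lifts are chosen. Take $G=\langle f_1\rangle\oplus\langle f_2\rangle\cong\mathbb Z/(p)\oplus\mathbb Z/(p^2)$ and $U=pG$, so $(G,U)\in\mathcal F_1(\SigCyc)$ and $\bar U=0$. The basis $\{\overline{f_1+f_2},\,\overline{f_2}\}$ of $G/pG$ is a legitimate choice under your scheme, yet every lift of either vector has order $p^2$ (the coefficient of $f_2$ is a unit mod $p$), and two cyclic subgroups of order $p^2$ cannot form a direct sum inside a group of order $p^3$. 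Purity of each individual $\langle c_\beta\rangle$ gives neither independence of the family nor that it exhausts $G$. The basis must be chosen compatibly with the isotypic filtration $G_n=\bigoplus_{\ell\le n}T_\ell$, lifting the new basis vectors at stage $n$ into $G_n$ so that they have order at most $p^n$; this inductive construction of $B_n\dotcup B_n'$, together with the lifting lemma, is the actual content of the paper's proof, not a technicality.

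The fallback on the Stacked Bases Theorem does not repair this. Cohen--Gluck is a statement about subgroups of \emph{free} abelian groups, and the analogous statement for $G\in\SigCyc$ under the hypothesis you invoke (namely $G/U\in\SigCyc$) is false: the paper's own example (1), the embedding $\mathbb Z/(p^2)\hookrightarrow\mathbb Z/(p^3)\oplus\mathbb Z/(p)$, $1\mapsto(p,1)$, has $G$, $U$ and $G/U\cong\mathbb Z/(p^2)$ all in $\SigCyc$, yet the pair is indecomposable and not a picket, so no stacked bases exist. The hypothesis that actually forces stacked bases is $pG\subseteq U$, and you use it only \emph{after} assuming the stacked bases exist (to bound $k_\gamma\le 1$); the existence claim itself is therefore unsupported --- indeed it is equivalent to the theorem being proved. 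A smaller but real problem is the uniqueness argument: a summand $P^{n+1}_n$ contributes a cyclic group of order $p^n$ to $U$, so $U_n(U)=\mu^n_n+\mu^{n+1}_n$ rather than $\mu^n_n$, and with infinite multiplicities the resulting system cannot be solved by subtraction of cardinals. The paper sidesteps this entirely by noting that every picket has a local endomorphism ring and invoking Krull--Remak--Schmidt--Azumaya.
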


\begin{rem}
  \label{remark2}
  The corresponding result for pairs $(G,U)$ where the subgroup $U$ is
  elementary 
  (and not the factor $G/U$) does not hold:
  Let $G=\bigoplus_{n\in\mathbb N} y_n\, \mathbb Z/(p^n)$ be the direct sum of cyclic $p$-groups
  where $y_n$ denotes the generator of the $n$-th direct
  summand, and take for $U$ the elementary subgroup
  $U=\bigoplus_{n>1} (y_1-p^{n-1}y_n)\mathbb Z$.
  Then the coset of $y_1$ in the factor group $G/U$ is divisible by all powers of $p$,
  so $G/U$ cannot be a~direct sum of cyclic $p$-groups.
  A different way to look at it is that
  the quotient $G/U$ is a countable reduced primary group which has
  an Ulm invariant at $\omega$ \cite[Remark (b) on p.\ 31]{kap}.
  In particular, the pair $(G,U)$ is not a direct sum of
  pickets of type $P_0^n$ or $P_1^n$.
\end{rem}

Because of Remark \ref{remark2},
  the corresponding result for $\mathcal S_1$ requires an additional condition.

\begin{thm} An object $(G,U)$ in $\mathcal S_1(\SigCyc)$ is a direct sum of pickets
    of type $P_0^n$ or $P_1^n$ if and only if $G/U\in\SigCyc$.  In this case,
  the direct sum decomposition is unique up to reordering and isomorphy of the summands.
\label{thm-s1}
\end{thm}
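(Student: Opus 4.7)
The forward implication is immediate: a direct sum of pickets of type $P_0^n$ or $P_1^n$ has factor group the corresponding direct sum of cyclic $p$-groups $\mathbb{Z}/(p^n)$ or $\mathbb{Z}/(p^{n-1})$, hence lies in $\SigCyc$.

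For the converse, my plan is to fix a direct sum decomposition $G/U=\bigoplus_{j\in J}\langle\bar x_j\rangle$ with $\bar x_j$ of order $p^{n_j}$, lift each $\bar x_j$ to $x_j\in G$, and set $u_j:=p^{n_j}x_j\in U$. If $u_j=0$, then $\langle x_j\rangle$ provides a picket $P_0^{n_j}$; if $u_j\neq 0$, $p$-boundedness of $U$ forces $x_j$ to have order $p^{n_j+1}$ and $(\langle x_j\rangle,\langle u_j\rangle)$ provides a picket $P_1^{n_j+1}$. I would then modify the lifts so the nonzero $u_j$'s become linearly independent in $U$.

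A direct computation shows that the substitution $x_j\mapsto x_j-c\,x_\alpha$, with $c\in\mathbb Z$ required to satisfy $p^{n_\alpha-n_j}\mid c$ whenever $n_\alpha>n_j$, preserves the basis structure of $G/U$ (the new $\bar x_j'$ retains order $p^{n_j}$) and alters $u_j$ by an arbitrary $\mathbb{F}_p$-multiple of $u_\alpha$. Hence $u_j$ can be modified by any element of $W_n:=\mathrm{span}_{\mathbb{F}_p}\{u_k:n_k\ge n\}$ with $n=n_j$. The crucial input is $G\in\SigCyc$, giving $\bigcap_n p^nG=0$ and hence $\bigcap_n W_n=0$ (since $W_n\subseteq p^nG\cap U$). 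Using this vanishing together with the finite-combination nature of $W:=\sum_j\mathbb{F}_p u_j$, one constructs a subset $A\subseteq J$ with $\{u_\alpha\}_{\alpha\in A}$ a basis of $W$ and each $u_j$ a finite $\mathbb{F}_p$-combination of $\{u_\alpha:\alpha\in A,\,n_\alpha\ge n_j\}$---for example, by choosing $A_n\subseteq\{j:n_j=n\}$ so that $\{\bar u_\alpha:\alpha\in A_n\}$ is a basis of $W_n/W_{n+1}$ and setting $A=\bigsqcup_n A_n$. For each $j\notin A$, the expression of $u_j$ in this basis prescribes a modification of $x_j$ to a new lift $x_j'$ of order $p^{n_j}$ with $u_j'=0$.

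Finally, extending $\{u_\alpha\}_{\alpha\in A}$ to a basis of $U$ by adjoining $\{w_i\}_{i\in I}$, routine verification should yield
\[
(G,U)=\bigoplus_{\alpha\in A}(\langle x_\alpha\rangle,\langle u_\alpha\rangle)\oplus\bigoplus_{j\notin A}(\langle x_j'\rangle,0)\oplus\bigoplus_{i\in I}(\langle w_i\rangle,\langle w_i\rangle),
\]
a direct sum of pickets $P_1^{n_\alpha+1}$, $P_0^{n_j}$, and $P_1^1$. For uniqueness, I would read off multiplicities from numerical invariants of $(G,U)$: the total $\#P_0^n+\#P_1^n$ equals the $(n-1)$-st Ulm invariant of $G$, while $\#P_1^n=\dim_{\mathbb{F}_p}(p^{n-1}G\cap U)/(p^nG\cap U)$. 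The main obstacle is the construction of the filtration-compatible basis $A$ satisfying the finite-expression property, which uses $\bigcap_n W_n=0$ and distinguishes this theorem from the failure described in the Remark after Theorem~\ref{theorem-Fone}.
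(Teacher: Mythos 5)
Your strategy --- lifting a cyclic decomposition of $G/U$ and then normalizing the boundary elements $u_j=p^{n_j}x_j$ --- is genuinely different from the paper's, which constructs a pure subgroup $H\le G$ with $H[p]=U$ via Kaplansky's pure-independent-set lemma and then splits $H$ off using the Irwin--Swanek purifiability theorem. Unfortunately the step you yourself identify as the main obstacle is a real gap, and the fix you propose does not work: the vanishing $\bigcap_n W_n=0$ is \emph{not} sufficient to guarantee that a set $A$ obtained by lifting bases of the quotients $W_n/W_{n+1}$ spans $W$, let alone that every $u_j$ is a finite combination of $\{u_\alpha:\alpha\in A,\ n_\alpha\ge n_j\}$. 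Concretely, take $G=\bigoplus_{n\ge1}\bigl(\langle z_n\rangle\oplus\langle w_n\rangle\bigr)$ with $z_n$ of order $p^{n+1}$ and $w_n$ of order $p^{n}$, and $U=\bigoplus_{n\ge1}\langle v_n\rangle$ where $v_n=p^nz_n$; then $(G,U)\in\mathcal S_1(\SigCyc)$, $G/U\cong\bigoplus_n(\mathbb Z/(p^n))^2\in\SigCyc$, and $(G,U)\cong\bigoplus_n(P_1^{n+1}\oplus P_0^n)$. The elements $x_n^{(1)}=w_n+pz_{n+1}-z_n$ and $x_n^{(2)}=z_n$ give a legitimate direct sum decomposition of $G/U$ into cyclic groups of order $p^n$, and
$$u_n^{(1)}=v_{n+1}-v_n,\qquad u_n^{(2)}=v_n.$$
Here $W=U$, $W_n=\mathrm{span}_{\mathbb F_p}\{v_k:k\ge n\}$, and $W_n/W_{n+1}$ is one-dimensional with $\bar u_n^{(1)}=-\bar v_n\ne 0$, so $A_n=\{(n,1)\}$ is a permissible choice in your recipe. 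But $\mathrm{span}\{u_\alpha:\alpha\in A\}=\mathrm{span}\{v_{k+1}-v_k:k\ge1\}$ lies in the kernel of the augmentation $\sum c_kv_k\mapsto\sum c_k$, so $u_1^{(2)}=v_1$ is not a finite combination of the $u_\alpha$, and the intended modification of $x_1^{(2)}$ cannot be carried out. (The other graded lift $A_n=\{(n,2)\}$ does work here, which shows that a good $A$ may exist but must be chosen with care; your argument gives no mechanism for making, or proving the existence of, such a choice.)

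This failure is exactly the phenomenon of the Remark following Theorem~\ref{theorem-Fone}, transported from $G/U$ to $U$: an element can acquire ``infinite height'' with respect to the filtration $(W_n)$ modulo the chosen subspace even though $\bigcap_nW_n=0$, and it is the step the paper's proof handles with Lemmas~\ref{lem-kap-pure} and \ref{lem-Hpure} and the purifiability theorem of Irwin and Swanek. The remainder of your outline is sound: the elementary substitutions $x_j\mapsto x_j-cx_\alpha$ with $p^{n_\alpha-n_j}\mid c$ do preserve the decomposition of $G/U$ and alter $u_j$ by $\mathbb F_p$-multiples of $u_\alpha$ only when $n_\alpha\ge n_j$; the final verification that the cyclic subgroups assemble into a direct sum of the pair is routine; and the uniqueness argument via Ulm invariants and $\dim_{\mathbb F_p}(p^{n-1}G\cap U)/(p^nG\cap U)$ is a valid alternative to the paper's appeal to Krull--Remak--Schmidt--Azumaya. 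But without a correct construction of $A$ the existence half of the proof is incomplete.
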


 \begin{rem}
   Some comments about the semisimplicity condition for subgroups or factors:

   (1) The nonsplit short exact sequence
    $$0\longrightarrow \mathbb Z/(p^2)\stackrel{{\iota\choose\pi}}\longrightarrow \mathbb Z/(p^3)\oplus
    \mathbb Z/(p)\longrightarrow \mathbb Z/(p^2)\longrightarrow 0$$
    shows that the condition that the subgroup or the factor group be
    elementary is needed
    to decompose the embedding (or the short exact sequence) as a direct sum of pickets.

    (2) If we assume that the group $G$ is $p^m$-bounded for some $m\in\mathbb N$
    (that is, $p^m\cdot G=0$),
    and $U$ is a subgroup such that either $U$ or $G/U$ is
    elementary, then our results are well-known:
      There are only
    finitely many finitely generated indecomposable objects, up to isomorphy,
    (namely the pickets, see for example \cite[3.1]{S}),
    and hence by the Auslander and Ringel-Tachikawa theorem any (not necessarily finitely generated) object
    is a direct sum of those (see \cite{M} for the Auslander and Ringel-Tachikawa theorem for embeddings).

    (3)
    Note that under the assumption that $G$ is $p^m$-bounded,
    the condition that $U$ or $G/U$ be elementary
    can even be relaxed to $U$ or $G/U$ being $p^2$-bounded; in this case any object is a direct sum of
    certain bi-pickets (see \cite[3.2]{S}).
    However, a further relaxation of the bound on the subgroup is not possible:
      For fixed $m\geq 8$,
      the problem of classifying all pairs $(G,U)$ where $G$ is a $p^m$-bounded finite abelian group
      and $U$ a $p^3$-bounded subgroup is considered infeasible, see \cite[3.7]{S}, \cite{RS}.
 \end{rem}

        We now state our main results in the more general setting.
      Let $R$ be a~commutative discrete valuation ring with the unique maximal ideal $(p)$, where $p\in R$ is non-nilpotent. All modules in this section are $R$-modules.

      A~module $M$ is {\it primary} (or {\it torsion}), if every element $x\in M$
      is annihilated by some power of $p$ (that is, $p^k x=0$ for some $k\in\mathbb N$).
      A~submodule $N$ of  $M$ is {\it pure} in $M$, if $p^kN=N\cap p^kM$
for all $k\in \mathbb{N}$.  A~subset $X$ of $M$ is {\it pure} in $M$, if the submodule $\langle X\rangle$ of $M$ generated by $X$ is pure in $M$.
 
 An element $x\in M$ has {\it height} $h_M(x)=k$, if $x\in p^k M$ and $x\notin p^{k+1}M$ for some $k\geq 0$ and
 $x$ has {\it infinite height} if $x\in p^k M$ for every $k\geq0$. We call a~subset $X$ of $M$ {\it height-bounded}, if $\sup\{h_{M}(x)|x\in X\}$ is finite. We denote the socle of $M$ by $M[p]=\{x\in M\;|\; px=0\}$. One can show that $N$ is pure in $M$ is and only if $h_N(x)=h_M(x)$ for all $x\in N[p]$, see \cite[Exercise 6, page 5036]{kt}.  For more information on discrete valuation rings and their modules see \cite{kt}.

By $\SigCyc(R)$ we denote the full subcategory of $R$-Mod consisting of $R$-modules
that are direct sums of cyclic primary $R$-modules.
Then  $\mathcal{S}_1(\SigCyc(R))$ and $\mathcal F_1(\SigCyc(R))$
consist of all pairs $(M,U)$
with $M\in\SigCyc(R)$ and $U$ an~$R$-submodule of $M$ such that
$pU=0$ and $p(M/U)=0$ holds, respectively.
For $n\in\mathbb N$ and $0\leq \ell \leq n$,
we define the pickets $P_\ell^n=(R/(p^n),(p^{n-\ell}))$
as usual.

Abelian $p$-groups are just the primary modules over the localization
$\mathbb Z_{(p)}$ of the integers at the prime ideal $(p)$, which is a discrete valuation ring.
Hence Theorems \ref{theorem-Fone} and \ref{thm-s1} are consequence of the following two results.

\begin{thm} \label{theorem-Fone-DVR}
 Every object $(M,U)$ in $\mathcal{F}_1(\SigCyc(R))$  is isomorphic to a~direct sum of pickets
 of type $P_{n-1}^n$ or $P_n^n$.
 The direct sum decomposition is unique up to reordering and isomorphy of the summands.
\end{thm}

\begin{thm} \label{mainR}
 An object $(M,U)$ in $\mathcal{S}_1(\SigCyc(R))$  is isomorphic to a~direct sum of pickets
 of type $P_0^n$ or $P_1^n$ if and only if $M/U\in\SigCyc(R)$.
 The direct sum decomposition is unique up to reordering and isomorphy of the summands.
\end{thm}

\section{Proof of Theorem \ref{theorem-Fone-DVR}}
\label{seq-proof-thm1}

We first show the result about $\mathcal F_1$.

\subsection{Nice filtrations}

Let $(M,U)$ be an object in $\mathcal F_1(\SigCyc(R))$, so
the $R$-module $M$ is a direct sum of cyclic $R$-modules and $U\subseteq M$
a submodule such that the factor $M/U$ is semisimple.
In a decomposition of $M$,
combine summands of the same length to obtain the isotypic decomposition
$$M=\bigoplus_{n\in\mathbb N} T_n$$
where $T_n$ is the direct sum of all summands of length $n$ in the given direct sum
decomposition of $M$. We put  $T_0=0$.

For each $n\in\mathbb N$,
write $T_n=\bigoplus_{\kappa_n} R/(p^n)$ as a direct sum
of $\kappa_n$ copies of $R/(p^n)$ where the index set
$\kappa_n$ is a cardinal number.

  We will use the associated filtration,
  $$M=\bigcup_{n\in\mathbb N} M_n,\qquad\text{where}\qquad
  M_n=\bigoplus_{\ell\leq n} T_\ell.$$
  Since $M$ has a filtration, so does the submodule $U$:
  $$U=\bigcup_{n\in\mathbb N}U_n,\qquad\text{where}\qquad
  U_n=U\cap M_n.$$

  Then each pair $(M_n,U_n)$ is also in $\mathcal F_1(\SigCyc(R))$.
  In the following sections we show that the filtration is nice
  in the sense that
  $$(*)\qquad (M_n,U_n)=(M_{n-1},U_{n-1})\oplus D\quad\text{where}\quad
  D\cong (P_n^n)^{\mu_n^n}\oplus (P_{n-1}^n)^{\mu_{n-1}^n}$$
  holds.  From formula $(*)$ we can derive
  the multiplicities of the pickets in the direct sum decomposition:

    Since $M_n=M_{n-1}\oplus T_n$ and $M_{n-1}+pM_n=M_{n-1}\oplus pT_n$, the vector space
    $\frac{M_n}{M_{n-1}+pM_n}$ has dimension $\kappa_n$.  Considering the chain
    $$M_{n-1}+pM_n\;\subseteq\; M_{n-1}+U_n\;\subseteq\; M_n,$$
    we obtain $\kappa_n=\mu_n^n+\mu_{n-1}^n$ where
    $$\mu_n^n=\dim\frac{M_{n-1}+U_n}{M_{n-1}+pM_n}
    \quad\text{and}\quad \mu_{n-1}^n=\dim\frac{M_n}{M_{n-1}+U_n}.$$

  Thus, the embedding $(M,U)$ is the union, and hence the direct sum, as follows,
  $$(M,U)\;=\;\bigcup_{n\in\mathbb N}(M_n,U_n)\;\cong\;\bigoplus_{n\in\mathbb N}
  \left[(P_n^n)^{\mu_n^n}\oplus(P_{n-1}^n)^{\mu_{n-1}^n}\right],$$
  proving the existence of the decomposition in the theorem.
  Since each picket has a local endomorphism ring,
  uniqueness follows from the Krull-Remak-Schmidt-Azumaya theorem, see \cite{azu,fac}.

  \begin{rem}
    Note that the assumption $(M,U)\in\mathcal F_1(\SigCyc(R))$ is necessary for $(*)$:
    For example consider $M=R/(p)\oplus R/(p^2)$ and $U$ generated
    by $(1,p)\in G$.  Then $(M,U)\cong P^1_1\oplus P^2_0$.
    If $M_1$ is the first summand of $M$ then $U_1=M_1\cap U=0$, so
    $(M_1,U_1)\cong P^1_0$.  Hence $(M_1,U_1)$ is not isomorphic to a direct summand of
    $(M_2,U_2)=(M,U)$.
  \end{rem}

\subsection{The setup for the induction}

We assume the notation from the previous section.
It remains to show that property $(*)$ holds.  For this,
we construct successively
minimal generating sets for the modules $M_n$ in the filtration.  Such a generating set
has the form $C_n\dotcup C_n'$ where
  $$M_n=\bigoplus_{g\in C_n\dotcup C_n'} gR,\qquad U_n=\bigoplus_{g\in C_n} gR\oplus
\bigoplus_{g\in C_n'} pgR.$$
In particular,
$$(M_n,U_n)\quad \cong\quad  \bigoplus_{g\in C_n}(gR,gR)\quad\oplus\quad
\bigoplus_{g\in C_n'} (gR,p\,gR).$$
As a byproduct, we obtain sets
$$B_n=\{g+pM_n:g\in C_n\},\qquad B_n'=\{g+pM_n:g\in C_n'\}$$
such that $B_n$ is a basis for $\frac{U_n}{pM_n}$ and
$B_n\dotcup B_n'$ a basis for $\frac{M_n}{pM_n}$.

In the case where $n=1$, the module $M_1$ is a vector space over $R/(p)$ so
we can take for $C_1=B_1$ a basis for the subspace $U_1=M_1\cap U$ and extend it to a basis
$C_1\dotcup C_1'=B_1\dotcup B_1'$ for $M_1$.

\subsection{Constructing bases}

Assume now that  minimal generating sets $C_{n-1}$, $C_{n-1}'$
and bases $B_{n-1}$, $B_{n-1}'$ have been constructed.
Put
$$\tilde B_{n-1}=\{g+pM_n:g\in C_{n-1}\},\qquad \tilde B_{n-1}'=\{g+pM_n:g\in C_{n-1}'\},$$
then $\tilde B_{n-1}$ is in bijection to $B_{n-1}$ under the isomorphism
$\frac{U_{n-1}+pM_n}{pM_n}\cong \frac{U_{n-1}}{U_{n-1}\cap pM_n}=\frac{U_{n-1}}{pM_{n-1}}$ and hence a
basis, and
$\tilde B_{n-1}\cup \tilde B_{n-1}'$ is in bijection to $B_{n-1}\cup B_{n-1}'$ under the
isomorphism $\frac{M_{n-1}+pM_n}{pM_n}\cong\frac{M_{n-1}}{M_{n-1}\cap pM_n}=\frac{M_{n-1}}{pM_{n-1}}$
and hence a basis.
Consider the following diagram.

$$\xymatrixrowsep{1pc}
\xymatrix{ & \frac{M_n}{pM_n} \ar@{-}[d] & \\
  & \frac{U_n+M_{n-1}}{pM_n} \ar@{-}[dl] \ar@{-}[dr]  & \\
  \frac{U_n}{pM_n} \ar@{-}[dr] & & \frac{M_{n-1}+pM_n}{pM_n} \ar@{-}[dl] \\
  & \frac{U_{n-1}+pM_n}{pM_n} \ar@{-}[d] &\\
  & 0 & }$$

Note that
\begin{eqnarray*}U_n\cap (M_{n-1}+pM_n) & = & (U_n\cap M_{n-1})+pM_n\\
  & = & (U\cap M_n\cap M_{n-1})+pM_n\\
  & = & U_{n-1}+pM_n.
\end{eqnarray*}

For the first equation, recall $pM_n\leq U_n$ since we are dealing
with objects in $\mathcal F_1$. So, the space in the fourth row in the diagram is the intersection of the spaces on the left
and on the right, while the space in the second row near the top is their sum.
Extend the basis $\tilde B_{n-1}$ for $\frac{U_{n-1}+pM_n}{pM_n}$ $($in the fourth row) to a basis
$B_n$ for $\frac{U_n}{pM_n}$ $($on the left).
Recall that $\tilde B_{n-1}\dotcup \tilde B_{n-1}'$ is a basis for $\frac{M_{n-1}+pM_n}{pM_n}$.
Then $B_n\dotcup \tilde B_{n-1}'$ is a linearly
independent set, hence a basis for $\frac{U_n+M_{n-1}}{pM_n}$.
Extend $\tilde B_{n-1}'$ to a set $B_n'$ so that $B_n\dotcup B_n'$ is a basis for $\frac{M_n}{pM_n}$
$($at the top).

\subsection{A lifting lemma}

We will use the following result, which may be well known,
to lift the basis $B_n\dotcup B_n'$
to a set of generators for $M_n$.

\begin{lem}
  Suppose $T$ is a direct sum of copies of $R/(p^n)$.
  Let $B=\{b_i:i\in I\}$ be a linearly independent set in $T/pT$
  and $C=\{c_i:i\in I\}$ any set of liftings of $B$ to $T$ satisfying $b_i=c_i+pT$.
  Then the submodule $M$ of $T$
  generated by the elements $c_i$ is the direct sum
  $$M=\bigoplus_{i\in I}c_i \mathbb Z.$$
  Moreover, if $B$ is a basis for $T/pT$, then $T=\bigoplus_{i\in I} c_i\mathbb Z$.
\label{lem-GT}\end{lem}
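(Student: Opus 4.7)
The plan is to prove the lemma by induction on $n$, the exponent of $T$. Before starting the induction, I would record a small preliminary observation: each lift $c_i$ has order exactly $p^n$ in $T$. To see this, fix a direct sum decomposition $T=\bigoplus_{j\in J} e_j\mathbb{Z}/(p^n)$ and write $c_i=\sum_j \alpha_{ij}e_j$; since $b_i\neq 0$ in $T/pT$, some $\alpha_{ij}$ is a unit in $\mathbb{Z}/(p^n)$, so the corresponding coordinate of $p^{n-1}c_i$ is nonzero. Hence each cyclic group $c_i\mathbb{Z}$ has order $p^n$, and the first assertion reduces to showing the sum $\sum_{i\in I} c_i\mathbb{Z}$ is direct. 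Since any $\mathbb{Z}$-linear relation $\sum_{i\in F}k_ic_i=0$ involves only a finite subset $F\subseteq I$, it suffices to treat finitely many $c_i$ at a time.

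The base case $n=1$ is immediate: $pT=0$ forces $T/pT=T$ and $c_i=b_i$, so directness is just $\mathbb{Z}/(p)$-linear independence of $B$. For the inductive step, I would assume the result for direct sums of copies of $\mathbb{Z}/(p^{n-1})$ and consider a relation $\sum_{i\in F}k_ic_i=0$ in $T$. Reducing modulo $pT$ and using linear independence of $B$ forces $p\mid k_i$ for each $i\in F$; write $k_i=pl_i$. Then $\sum_{i\in F}l_ic_i\in T[p]=p^{n-1}T$. I would then pass to the quotient $T'=T/p^{n-1}T$, which is itself a direct sum of copies of $\mathbb{Z}/(p^{n-1})$. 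Because $p^{n-1}T\subseteq pT$ for $n\geq 2$, the identification $T'/pT'=T/pT$ shows that the images $c_i'\in T'$ lift the same linearly independent set $B$, and the inductive hypothesis applied to $\sum l_ic_i'=0$ in $T'$ yields $p^{n-1}\mid l_i$. Hence $p^n\mid k_i$ and $k_ic_i=0$ in $T$, as required.

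For the ``moreover'' statement I would argue by a Nakayama-style iteration. Setting $G=\sum_{i\in I}c_i\mathbb{Z}$, the hypothesis that $B$ spans $T/pT$ gives $G+pT=T$; multiplying by $p$ yields $pT\subseteq pG+p^2T\subseteq G+p^2T$, so $T=G+p^2T$, and inductively $T=G+p^kT$ for every $k\geq 0$. Taking $k=n$ and using $p^nT=0$ gives $T=G$.

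The hard part will be the bookkeeping in the inductive step: I need to verify simultaneously that the quotient $T/p^{n-1}T$ really is a direct sum of copies of $\mathbb{Z}/(p^{n-1})$, that the images $c_i'$ are liftings of the same set $B$ (under the identification $T'/pT'=T/pT$), and that the relation $\sum l_ic_i'=0$ holds in $T'$. The identity $T[p]=p^{n-1}T$, which holds precisely because $T$ is a direct sum of copies of $\mathbb{Z}/(p^n)$, is the key fact that makes the reduction from $k_i$ to $l_i$ land in exactly the subgroup needed to invoke the inductive hypothesis.
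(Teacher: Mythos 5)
Your proof is correct. It takes a recognizably different route from the paper's: the paper fixes $T$ and runs a downward induction on $m=n-1,\dots,0$, proving the statement $(P_m)$ that $p^mG=\bigoplus_i p^mc_i\mathbb Z$ (and $p^mT=p^mG$ in the basis case), using the isomorphism $T/pT\cong p^mT/p^{m+1}T$ at each layer of the filtration; you instead induct on the exponent $n$ itself, passing to the quotient $T'=T/p^{n-1}T$, which is homocyclic of exponent $p^{n-1}$, and exploiting the identity $T[p]=p^{n-1}T$ together with the identification $T'/pT'\cong T/pT$ to push a relation down one level. The central computational move is the same in both arguments (reduce a relation modulo $p$, use linear independence of $B$ to extract a factor of $p$ from each coefficient, recurse), so the two proofs are close cousins, but your scaffolding via quotients is a legitimate alternative to the paper's scaffolding via the subgroups $p^mT$. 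You also handle the ``moreover'' clause differently: the paper folds surjectivity into the same downward induction, while you dispatch it with a separate Nakayama-style iteration $T=G+p^kT$ terminating at $k=n$; the latter is arguably cleaner since it decouples the two claims. One small point worth making explicit in a final write-up: from $\sum l_ic_i'=0$ in $T'$ the inductive hypothesis gives $l_ic_i'=0$, i.e.\ $l_ic_i\in p^{n-1}T$, whence $k_ic_i=pl_ic_i\in p^nT=0$ directly --- you do not actually need to pass through the divisibility statement $p^{n-1}\mid l_i$, although your preliminary observation on the order of the lifts makes that step valid as well.
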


\begin{proof}
  We show by induction that for each $m=n-1,\ldots,0$ the following statement holds:

  $$(P_m) \quad p^mM=\bigoplus_{i\in I} p^mc_iR; \quad\text{if $B$ is a basis then}\quad
  p^mT=p^mM.$$

  We will use that the map
  $$\varphi_m: \frac{T}{pT}\to\frac{p^mT}{p^{m+1}T}, \;x+pT\mapsto p^mx+p^{m+1}T$$
  is an isomorphism of vector spaces.  In particular, the statement $(P_m)$ holds
  for $m=n-1$.

  We deal with the induction step $m+1\to m$.
  Let $B'=\{b_i'=\varphi_m(b_i)  :i\in I\}$.
  Then $B'$ is a linearly independent set and, if $B$ is a basis, then so is $B'$.
  The elements $b_i'$ satisfy $b_i'=p^mc_i+p^{m+1}T$ for $i\in I$.

  The elements $p^mc_i$ form a generating set for $p^mM=\sum_{i\in I}p^mc_iR$.

  To show that the sum is a direct sum, suppose $J\subseteq I$ is a~finite subset
  and $0=\sum_{i\in J} a_ip^mc_i$ where
  $a_i\in R$.   Then in $p^mT/p^{m+1}T$, we have
  $0=\sum a_ib_i'$.  Since  $B'$ is a linearly independent set,
  for all $i\in J$ there exist $a_i'\in R$ such that $a_i=a_i'p$.
  Since $\sum a_i'p^{m+1}c_i=0$ we can use the induction hypothesis $(P_{m+1})$ to conclude
  that $a_i'p^{m+1}c_i=0$ for all $i\in J$, hence each $a_ip^mc_i=0$, so
  the sum is a direct sum.

  It remains to show that if $B$ is a basis then $p^mT=p^mM$.
  Let $x\in p^mT$, then $x+p^{m+1}T=\sum a_ib_i'$ with $a_i\in R$.
  Hence $x-\sum a_ip^mc_i$ is in $p^{m+1}T=p^{m+1}M$, by induction hypothesis $(P_{m+1})$,
  so there are
  $a_i'\in R$ with $x-\sum a_ip^mc_i=\sum a_i'p^{m+1}c_i$.
  This shows that $x\in p^mM$.
\end{proof}

\begin{prop}
  \label{prop-lifting}
  Suppose we have the following setup:
  \begin{itemize}
    \item $F=F'\oplus T$ where $F'$ is $p^{n-1}$-bounded and $T$ is a direct
      sum of copies of $R/(p^n)$.
    \item $B'$ is a basis for
        $F'/pF'$ and $B=B'\dotcup B''$ a linearly independent subset in $F/pF$ extending $B'$.
    \item $C'$ is a set of elements in $F'$ corresponding to the elements in $B'$,
      and $C''$ a set of liftings for the elements in $B''$ to $F$.
  \end{itemize}
  Then the sum $M=\sum_{c\in C'\dotcup C''} c R$ is a direct sum.
  Moreover, if $B$ is a basis for $F/pF$ then $F=M$.
\end{prop}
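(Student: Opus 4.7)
The plan is to split the analysis along $F = F' \oplus T$, handling the $C''$-part by projecting onto $T$ and invoking Lemma \ref{lem-GT}, and handling the $C'$-part by using the hypothesis that $C'$ already realises $F'$ as a direct sum of cyclic subgroups. This is the natural reading of ``$C'$ corresponds to $B'$'' and is the setting in which the proposition will actually be applied in the paper.

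First I would introduce the projection $\pi : F \to T$ along $F'$. Since $B = B' \dotcup B''$ is linearly independent in $F/pF \cong F'/pF' \oplus T/pT$ and $B'$ is a basis of $F'/pF'$, the images of $B''$ under the further projection $F/pF \to T/pT$ form a linearly independent set $\tilde B''$, which is lifted to $T$ by $\{\pi(c'') : c'' \in C''\}$. Lemma \ref{lem-GT} then produces the direct sum $\bigoplus_{c''} \pi(c'')\mathbb Z$ inside $T$, in which each generator has order $p^n$. To prove directness of $G$, I would test a putative relation $\sum_{c} a_c c = 0$ against $\pi$: the resulting equality $\sum_{c''} a_{c''} \pi(c'') = 0$ forces $p^n \mid a_{c''}$. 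Writing $c'' = c''_{F'} + \pi(c'')$ with $c''_{F'} \in F'$ and using that $F'$ is $p^{n-1}$-bounded, one gets $a_{c''} c''_{F'} = 0$, hence $a_{c''} c'' = 0$. The remaining relation $\sum_{c'} a_{c'} c' = 0$ lives in $F'$ and is killed termwise by the direct-sum hypothesis on $C'$. For the ``moreover'' clause, if $B$ is a basis of $F/pF$ then $\tilde B''$ becomes a basis of $T/pT$, so Lemma \ref{lem-GT} yields $T = \bigoplus_{c''} \pi(c'')\mathbb Z$; each $\pi(c'') = c'' - c''_{F'}$ lies in $G$, giving $T \subseteq G$ and, together with $F' \subseteq G$, the equality $G = F$.

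The main obstacle is conceptual: one has to recognise that the conclusion of the proposition fails for arbitrary lifts $C'$ of $B'$, so the hypothesis on $C'$ must be read as including the assumption $F' = \bigoplus_{c'} c'\mathbb Z$. For instance in $F' = \mathbb Z/p \oplus \mathbb Z/p^3 = \langle e_1\rangle \oplus \langle e_2\rangle$, the lifts $c_1' = e_1 + p e_2$ and $c_2' = e_2$ of the basis $\{\bar e_1, \bar e_2\}$ of $F'/pF'$ satisfy $p c_1' = p^2 c_2'$, a non-trivial relation showing the sum $c_1'\mathbb Z + c_2'\mathbb Z$ is not direct. In the paper's use of the proposition, the set $C'$ comes from the previous stage of the inductive construction of Section~2, where $G_{n-1} = \bigoplus_{c \in C_{n-1} \dotcup C_{n-1}'} c\mathbb Z$ is already a direct-sum decomposition, so the strengthened hypothesis is automatic.
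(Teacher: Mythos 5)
Your argument is correct and, in its core, is the same as the paper's: the paper also splits along $F=F'\oplus T$, writes each $c\in C''$ as $(c_1,c_2)$ with $c_2\in T$, and applies Lemma~\ref{lem-GT} to the $T$-components. The two differences are these. First, for the $C'$-part the paper says ``by induction on $n$, $F'=\bigoplus_{c\in C'}c\mathbb Z$'', whereas you build this into the hypothesis by reading ``$C'$ corresponds to $B'$'' as $F'=\bigoplus_{c'\in C'}c'\mathbb Z$; your observation that some such strengthening is genuinely needed is correct, and your example in $\mathbb Z/(p)\oplus\mathbb Z/(p^3)$ (where $p c_1'=p^2c_2'$) does show that arbitrary lifts of a basis of $F'/pF'$ need not generate $F'$ as a direct sum, so the literal ``lifting'' reading would make the statement false; your reading is exactly what holds in the paper's application, where $C'=C_{n-1}\dotcup C_{n-1}'$ comes with the direct-sum property from the previous inductive stage, and it is also what the paper's own induction implicitly uses. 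Second, you spell out the step the paper compresses into ``hence so is $G$'': you verify that the projections of $B''$ to $T/pT$ are linearly independent (using that $B'$ is a basis of $F'/pF'$), and you split an arbitrary relation by applying $\pi$, getting $p^n\mid a_{c''}$ from the order of $\pi(c'')$ and killing the $F'$-components via $p^{n-1}F'=0$; this detail is accurate and fills a real gap in the paper's terse wording, as does your explicit argument $\pi(c'')=c''-c''_{F'}\in G$ for the ``moreover'' clause. So: same strategy and same key lemma, but with the hypothesis on $C'$ made precise and the directness verification carried out in full, which if anything is a more careful version of the paper's proof.
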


\begin{proof}
  We show the result by induction on $n$.  The case $n=1$ is clear.
  Suppose $n>1$.  By induction hypothesis, we have that the sum
  $\sum_{c\in C'} cR$ is the direct sum $F'=\bigoplus_{c\in C'}cR$.
  Write each $c\in C''$ as a pair $c=(c_1,c_2)$ with $c_1\in F'$ and $c_2\in T$,
  and similarly, each $b\in B''$ as a pair $b=(b_1,b_2)$ with $b_1\in F'/pF'$
  and $b_2\in T/pT$.  If $b=c+pF$ then $(b_1,b_2)=(c_1+pF',c_2+pT')$.

  By Lemma \ref{lem-GT}, the sum $\sum_{(c_1,c_2)\in C''}c_2R$ is a direct sum,
  hence so is $M=\sum_{c\in C'\dotcup C''}cR$.
  Moreover, if $B$ is a basis for $F/pF$ then $B''$ is a basis for $T/pT$,
  hence we obtain from Lemma \ref{lem-GT} that $T=\bigoplus_{(c_1,c_2)\in C''}c_2R$,
  and it follows that $F=F'\oplus T=\bigoplus_{c\in C'\dotcup C''} cR$.
\end{proof}

\subsection{Constructing generators}

Recall that $B_n$ is a basis for $U_n/pM_n$ and that
the set $C_{n-1}$ consists of generators in $M_{n-1}$, corresponding to
basis elements in $\tilde B_{n-1}=\{g+pM_n:g\in C_{n-1} \}$.
  Extend $C_{n-1}$ by choosing for each basis element
$b\in B_n\setminus \tilde B_{n-1}$ an element $g\in M_n$ which satisfies $b=g+pM_n$, this is the set $C_n$.
Similarly, extend $C_{n-1}'$ by choosing elements $g\in M_n$ for the basis elements
$b\in B_n'\setminus \tilde B_{n-1}'$ to obtain the set $C_n'$.

We obtain from Proposition~\ref{prop-lifting} that
$$(**)\qquad M_n=\bigoplus_{g\in C_n\dotcup C_n'} gR.$$

It remains to verify $U_n=\bigoplus_{g\in C_n} gR\oplus \bigoplus_{g\in C_n'} pgR$.
  The right hand side is contained in the left hand side:
  Since $U_n\supset pM_n$, we have $pgR\subset U_n$ for $g\in C_n'$.
  Let $g\in C_n$.  For $\bar g\in U_n/pM_n$ there is $h\in U_n$
  such that $\bar h=\bar g$, hence $g-h\in p{M_n}\subset U_n$.  Hence $g\in U_n$.

  The left hand side is contained in the right hand side:
  Let $x\in U_n$, then $\bar x\in U_n/pM_n$ is a linear combination
  $\bar x=\sum a_i b_i$ where $a_i\in R$, $b_i\in B_n$.
  Suppose $g_i\in C_n$, $\bar g_i=b_i$, so $x-\sum a_ig_i\in pM_n$,
  hence by $(**)$ there are $k_i\in C_n$, $\ell_i\in C_n'$,
  $d_i,e_i\in R$
  such that $x-\sum a_ig_i=p(\sum d_ik_i+\sum e_i\ell_i)$, hence
  $$x=[\sum a_ig_i+p\sum d_ik_i]+[p\sum e_i\ell_i]
  \in \bigoplus_{g\in C_n}gR\oplus \bigoplus_{g\in C_n'}pgR.$$

  To finish the proof of the theorem, note that
  $$(M_n,U_n)\;=\;\bigoplus_{g\in C_n}(gR,gR)
  \oplus\bigoplus_{g\in C_n'}(gR, pgR)$$
  extends the previous decomposition for $(M_{n-1},U_{n-1})$.

  \section{Proof of Theorem \ref{mainR}} \label{sec-proof3}

  Now we deal with $\mathcal S_1$.
  We split the proof of Theorem \ref{mainR} into several lemmata. The next two Lemmata are adaptations of Lemmata 10 and 11 in \cite{kap}.

\begin{lem}
 Let $M$ be a primary $R$-module and $N$ a pure submodule, let $x\in M[p],x\notin N$ with $h(x)=k<\infty$ and $\forall a\in N[p] (h(x+a)\leq h(x))$. Write $x=p^k y$ and let $L=N+Ry$. Then $L=N\oplus Ry$ and $L$ is pure in $M$. \label{3.2}
\end{lem}

\begin{proof}
 $N\cap Ry=0$ because any nonzero submodule of $Ry$ contains $x$, whereas $x\notin N$. We prove that $L$ is pure in $M$. Let $w\in L[p]$. We show that $w$ has the same height in $L$ as in $M$. We can assume that $w= x+a$, where $a\in N[p]$. We have $h_N (a)\leq h_L (a)\leq h_M (w)=h_N (a)$, because $N$ is pure in $M$. Now $h_L (x)=h_M (x)=k$ because $x=p^k y$ and $y\in L$.  If $h(a)\neq k$, then $h(x+a)=\min\{k,h(a)\}$ in both, $L$ and $M$. If $h(a)=k$, then $h(x+a)\geq k$, but $h(x)=k$ is by hypothesis maximal among all $h(x+a)=h(w)$, so again $h_L (w)=h_M (w)$. It follows that $L$ is pure in $M$.
\end{proof}

\begin{lem}
Let $M$ be a~primary $R$-module, let $P,Q$ be submodules
with $P\subseteq Q\subseteq M[p]$, and suppose $Q$ is of bounded height. Let $\{x_i|i\in I\}$ be a~pure set satisfying $(\bigoplus_{i\in I} Rx_i) \cap M[p]=P$. Then $\{x_i|i\in I\}$ can be enlarged to a~pure set $\{y_j|j\in J\}$ satisfying $(\bigoplus_{j\in J} Ry_j)\cap M[p]=Q$. \label{lem-kap-pure}
\end{lem}

\begin{proof}
 Consider pure sets $\{y_j|j\in J\}$ which contain $\{x_i|i\in I\}$ and satisfy $(\bigoplus_{j\in J} Ry_j)\cap M[p] \subseteq Q$. By Kuratowski-Zorn's lemma we may pick a maximal one among these sets; we denote it again by $\{y_j|j\in J\}$. Assume $z\in Q$, but $z\notin ( \bigoplus_{j\in J} Ry_j)\cap M[p]$. Since $\{z+a|a\in ( \bigoplus_{j\in J} Ry_j)\cap M[p]\}\subseteq Q$ and $Q$ is of bounded height, there is an $a\in ( \bigoplus_{j\in J} Ry_j)\cap M[p]$ such that $x=z+a$ is of maximal height $h_M(x)=k$. Write $x=p^k y$, for $y\in M$. Then by Lemma \ref{3.2} the enlarged set $\{y\}\cup\{y_j|j\in J\}$ is again pure. Since $(Ry\oplus\bigoplus_{j\in J} Ry_j)\cap M[p]\subseteq Q$, we have contradicted the maximality of $\{y_j|j\in J\}$.
\end{proof}

We adapt two theorems by Kulikov, originally formulated for abelian $p$-groups.

\begin{thm}\label{kul1}
A primary $R$-module $M$ is a direct sum of cyclic modules if and only if $M[p]$ is the union of an~ascending chain of height-bounded submodules.
\end{thm}

\begin{proof}
 The necessity is clear. To prove the sufficiency let $M[p]=\bigcup_{i\geq 1}P_i$, where
 $P_1\subseteq P_2\subseteq P_3\subseteq\ldots$ and each submodule $P_i$ is height-bounded. We construct pure sets $X_i$ such that $X_1\subseteq X_2\subseteq X_3\subseteq\ldots$ and $\langle X_i\rangle\cap M[p]=P_i$ for $i\in\mathbb{N}$.  Suppose $X_1,\ldots,X_n$ have already been constructed. Lemma \ref{lem-kap-pure} enables us to construct $X_{n+1}$, such that $X_{n}$ is it's direct summand. Let $X=\bigcup_{i\geq 1}X_i$. Then the submodule $\langle X\rangle$ generated by $X$ contains $M[p]$, is a~direct sum of cyclic modules and is pure in $M$.

 We prove $\langle X\rangle=M$ by induction. Suppose $\langle X\rangle$ contains all elements of order $\leq p^n$, and let $x\in M$ be of order $p^{n+1}$. Then $px\in\langle X\rangle$, and by purity there is an $y\in\langle X\rangle$ with $px=py$. So $x-y\in M[p]\subseteq\langle X\rangle$, hence $x\in\langle X\rangle$. It follows that $M$ is a direct sum of cyclic modules.
\end{proof}

\begin{thm} \label{kul2}
 Let an~$R$-module $M$ be a direct sum of cyclic modules. Then any submodule $N$ is again a direct sum of cyclic modules.
\end{thm}

\begin{proof}
 \cite[Theorem 8.5]{kt}
\end{proof}

\begin{lem}
  Let $(M,U)\in\mathcal{S}_1(\SigCyc(R))$. There exists a~pure submodule $N$ of $M$ such that $N[p]=U$.
  Moreover $(N,U)$ is isomorphic to a~direct sum of pickets of type $P_1^n$.
\label{NpU}
\end{lem}

\begin{proof}
Since $M\in\SigCyc(R)$, we have  $M[p]=\bigcup_{i\geq 1}P_i$, where $P_1\subseteq P_2\subseteq \ldots $ and each submodule $P_i$ is height-bounded, by Theorem \ref{kul1}. Let $U_i=U\cap P_i$. Then $U=\bigcup_{i\geq 1}U_i$, where $U_1\subseteq U_2\subseteq\ldots $, and each subgroup $U_i$ is of bounded height.

We are going to construct pure subsets
$X_1\subseteq X_2\subseteq\ldots$ such that $\langle X_i\rangle\cap P=U_i$.
Suppose that $X_1,\ldots, X_n$ are constructed.
We apply Lemma \ref{lem-kap-pure} to $P=\langle X_n\rangle\cap M[p]=U_n$,
$Q=U_{n+1}$ and get a~pure subset $X_{n+1}$ such that $\langle X_{n+1}\rangle\cap M[p]=U_{n+1}$.
Let $X=\bigcup_{i\geq 1}X_i$. Then $N=\langle X\rangle$ is a~pure submodule of $M$ with $N[p]=U$.

Since $M\in\SigCyc(R)$, we have $N\in\SigCyc$, by Theorem \ref{kul2}. It follows that $(N,U)$ is a~direct sum of pickets of the form $P_1^n$, because $N[p]=U$.
\end{proof}

\begin{lem}\label{lem-directsummand}
  Let $(M,U)\in\mathcal{S}_1(\SigCyc(R))$ such that $M/U\in\SigCyc(R)$.
  Let $N$ be a~pure submodule of $M$ such that $N[p]=U$. Then $N$ is a~direct summand of $M$.
\end{lem}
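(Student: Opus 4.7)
The plan is to reduce the statement to showing $G/H\in\SigCyc$. Once that is established, the pure exact sequence $0\to H\to G\to G/H\to 0$ splits: writing $G/H=\bigoplus_\alpha\langle\bar g_\alpha\rangle$ with $\bar g_\alpha$ of order $p^{n_\alpha}$, purity of $H$ gives $p^{n_\alpha}g_\alpha\in H\cap p^{n_\alpha}G=p^{n_\alpha}H$ for any lift $g_\alpha\in G$, so one can adjust $g_\alpha$ by an element of $H$ to obtain a lift $g'_\alpha$ annihilated by $p^{n_\alpha}$, and $K=\bigoplus_\alpha\langle g'_\alpha\rangle$ is then the required complement to $H$.

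To establish $G/H\in\SigCyc$, the plan is to apply Theorem~\ref{thm-kap-cyc} and exhibit $(G/H)[p]$ as an ascending union of bounded-height subgroups of $G/H$. A preliminary observation, again via purity, is that the natural map $G[p]/U\to(G/H)[p]$ is an isomorphism: for $g+H\in(G/H)[p]$ we have $pg\in H\cap pG=pH$, so $pg=ph$ for some $h\in H$, and $g-h\in G[p]$ represents the same class modulo $H$; the kernel of $G[p]\to(G/H)[p]$ is clearly $G[p]\cap H=H[p]=U$.

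The crux, and the step I expect to be the main obstacle, is a height comparison: for every $x\in G[p]$,
$$h_{G/H}(x+H)=h_{G/U}(x+U).$$
The inequality ``$\geq$'' is immediate from $U\subseteq H$. For ``$\leq$'', suppose $x-p^ng=h\in H$ for some $g\in G$. Applying $p$ and using $px=0$ gives $p^{n+1}g=-ph\in pH$; by purity of $H$ we obtain $p^{n+1}g=p^{n+1}h'$ for some $h'\in H$. Setting $g''=g-h'$ yields $p^{n+1}g''=0$ and $x-p^ng''=h+p^nh'\in H\cap G[p]=U$, so $h_{G/U}(x+U)\geq n$. The delicate point here is that, in general, the height in a further quotient can only grow; the purity-driven production of an alternative lift $g''$ of height compatible with $U$ is what makes equality hold.

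With this height comparison, the conclusion is immediate: since $G/U\in\SigCyc$, Theorem~\ref{thm-kap-cyc} supplies an ascending union $(G/U)[p]=\bigcup_k S_k/U$ with each $S_k/U$ of bounded height in $G/U$. Intersecting with $G[p]\supseteq U$ produces an ascending cover $\bigcup_k(S_k\cap G[p])/U=G[p]/U$, which via the isomorphism above is an ascending cover of $(G/H)[p]$ whose pieces have bounded height in $G/H$ by the height comparison. Theorem~\ref{thm-kap-cyc} then gives $G/H\in\SigCyc$, completing the proof.
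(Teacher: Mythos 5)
Your proof is correct, but it takes a genuinely different route from the paper: the paper disposes of this lemma in one line by quoting Theorem 1 of Irwin and Swanek \cite{is} (if $G/U$ is a direct sum of cyclic groups, $U\subseteq G[p]$ and $H$ is pure in $G$ with $H[p]=U$, then $H$ is a direct summand), whereas you reprove the relevant special case of that theorem from scratch. Both of your reductions are sound: the splitting of $0\to H\to G\to G/H\to 0$ once $G/H\in\SigCyc$ is the standard adjust-the-lift argument for a pure subgroup with $\Sigma$-cyclic quotient, and the identification $G[p]/U\cong (G/H)[p]$ together with the purity-driven height comparison $h_{G/H}(x+H)=h_{G/U}(x+U)$ for $x\in G[p]$ correctly transports Kaplansky's bounded-height criterion (Theorem \ref{thm-kap-cyc}) from $G/U$ to $G/H$; the only direction of that comparison you actually need is $h_{G/H}(x+H)\leq h_{G/U}(x+U)$, and your construction of the modified lift $g''=g-h'$ is exactly the right use of purity there. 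What your approach buys: it is self-contained modulo results already quoted in the paper, and it substantiates the remark in Section \ref{ch-generalizations} that the argument of \cite{is} ``can be repeated for modules over $R$'' --- your proof visibly uses nothing beyond purity and the bounded-height criterion, both available over a discrete valuation domain via \cite{kt}. What the paper's citation buys: brevity, and the stronger conclusion of \cite{is} that $G$ itself is $\Sigma$-cyclic (which is hypothesized here anyway).
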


\begin{proof}
We know from Lemma \ref{NpU} that $M$ has a pure submodule $N$ whose socle is $N[p]=U$.
We show that $M/N\in\SigCyc(R)$. Consider the epimorphism
$$\pi:M/U\longrightarrow M/N,\qquad x+U\longmapsto x+N$$

We claim that $\pi(M[p]/U)=(M/N)[p]$. Let $x\in M[p]$, then $p\pi(x+U)=px+N=N$, hence $x+N\in (M/N)[p]$. Conversely, if $x+N\in (M/N)[p]$, then $px\in N$ and by the purity of $N$ there is a $y\in N$ with $px=py$, hence $x-y\in M[p]$. It follows that $x+N=x-y+N\in\pi(M[p]/U)$.

Since $M/U$ is a direct sum of cyclic modules, by Theorem \ref{kul1} we have $M[p]/U=\bigcup_{i\geq 1}A_i$, where $A_1\subseteq A_2\subseteq\ldots$ is an ascending chain of height-bounded submodules. Now let $\pi(M[p]/U)=\bigcup_{i\geq 1}\pi(A_i), \quad $

The height in $M/U$ of an element $x+U\in M[p]/U$ is greater or equal to the height of $\pi(x+U)$ in $M/N$: Let $h(x+U)=k$ for some $k\in\mathbb{N}$, then $x+U=p^k y+U$ for some $y\in M$ and $p^{k+1} y\in U\subseteq N$, hence $h(\pi(x+U))=h(p^k y+N)\leq k$.
It follows that each $\pi(A_i)$ is height-bounded, so again by Theorem \ref{kul1} $M/N\in\SigCyc(R)$.
Therefore, by \cite[Theorem 10.2, $(2)\Leftrightarrow(3)$]{kt}, $N$ is a~direct summand of $M$, because $N$ is pure in $M$. 
\end{proof}

We are ready to prove Theorem \ref{mainR}.

\begin{proof}
  Let $(M,U)\in\mathcal{S}_1(\SigCyc(R))$ such that $M/U\in\SigCyc(R)$.
  Let $N$ be a~pure submodule of $M$ such that $N[p]=U$.
  Such a~submodule exists by Lemma \ref{NpU}.
  Then $(N,U)$ is isomorphic to a~direct sum of pickets of type $P_1^n$.

  By Lemma \ref{lem-directsummand}, the module $N$ is a~direct summand of $M$,
  hence $M=N\oplus N'$ for some submodule $N'$ of $M$.
  The submodule $N'$ is a~direct sum of cyclic groups, by Theorem \ref{kul2}. Therefore $(M,U)=(N,U)\oplus (N',0)$ and $(N',0)$ is isomorphic to a~direct sum of pickets of type $P_0^n$.

  As each picket has a local endomorphism ring,
  uniqueness follows from the Krull-Remak-Schmidt-Azumaya theorem, see \cite{azu,fac}.

  Regarding the converse, whenever $(M,U)$ is isomorphic to a direct sum of pickets,
    then the factor $M/U$ is in $\SigCyc(R)$.
\end{proof}

\subsection*{Acknowledgment} The first named author wants to thank Stanis\l aw Kasjan for fruitful discussions.

\end{document}